%% file: main.tex
\documentclass[runningheads,envcountsame]{llncs}
\usepackage[T1]{fontenc}
\usepackage{graphicx}
\usepackage[disable]{todonotes}
\usepackage{amsmath,amssymb,mathtools}
\usepackage{xspace}
\usepackage{url}
\usepackage{csquotes}
\usepackage{algorithm}
\usepackage[noend]{algpseudocode}

\usepackage{tikz}
\usepackage{pgfplots}
\pgfplotsset{compat=1.18}
\input{shared-macros}

\usepackage[backend=biber,style=numeric,doi=false,isbn=false,url=false,giveninits=true,maxbibnames=50]{biblatex}
\newcommand\mathematica{\texttt{Mathematica}\xspace}
\begin{document}
\title{Fast Isotopy Computation for T-Curves}
%
%
\author{
  Zoe Geiselmann\inst{1}\orcidID{0009-0009-9244-4752} \and
  Michael Joswig\inst{1}\orcidID{0000-0002-4974-9659} \and
  Lars Kastner\inst{1}\orcidID{0000-0001-9224-7761} \and
  Konrad Mundinger\inst{2}\orcidID{0009-0007-8059-9090} \and
  Sebastian Pokutta\inst{1,2}\orcidID{0000-0001-7365-3000} \and
  Christoph Spiegel\inst{2}\orcidID{0000-0002-6545-202X} \and
  Marcel Wack\inst{1}\orcidID{0009-0005-0360-8569} \and
  Max Zimmer\inst{2}\orcidID{0009-0007-8683-1030}}
\authorrunning{Geiselmann et al.}
%
\institute{Technische Universit\"{a}t Berlin, Chair of Discrete
Mathematics/Geometry\\
\email{\{geiselmann,joswig,kastner,wack\}@math.tu-berlin.de}
\and
Zuse Institut Berlin, AI in Society, Science, and Technology\\
\email{\{mundinger,pokutta,spiegel,zimmer\}@zib.de}}
\maketitle              
\begin{abstract}
  A T-curve of degree $d$ is given by a regular unimodular triangulation of $d \cdot \Delta_2$ together with a sign distribution on its lattice points.
  By Viro's Patchworking Theorem, this determines the ambient isotopy type (a.k.a. real scheme) of a smooth real plane projective algebraic curve of the same degree.
  We present a near-quadratic time algorithm for extracting that isotopy type from the triangulation and the signs.
  Through a GPU-accelerated implementation, this allows one to compute billions of real schemes per second, enabling exhaustive enumeration at scale.
  This algorithm was essential for our recent construction of all 121 real schemes of degree seven by T-curves.

  \keywords{Hilbert's 16th problem \and patchworking \and real scheme}
\end{abstract}

\section{Introduction}

Hilbert's 16th problem asks for a topological classification of real plane projective algebraic curves~\cite{1900:Hilbert}.
A common interpretation of \enquote{topological classification} is classification up to ambient isotopy; the resulting classes are called \emph{real schemes}.
That classification is settled through degree seven, with $56$ real schemes for degree six~\cite{1974:Gudkov:TopologyRealProjectiveVarieties} and $121$ for degree seven~\cite{Viro:1984}; degree eight remains open~\cite{2002:Orevkov:FlexibleMcurvesDeg8}.
Determining the real scheme of a curve given by an explicit polynomial is computationally expensive:
general methods like cylindrical algebraic decomposition~\cite{Collins:1975} or specific planar techniques~\cite{Seidel+Wolpert:2005,Kerber-Sagraloff:2012,FruhbisJoswigKastner:2006} rely on polynomial system solving, which is costly.
For instance, in determining real schemes of degree-six curves in \mathematica, Kaihnsa et al.~\cite{2019:KaihnsaEtAl:64curves} found that large coefficients arising from classical constructions made the computation prohibitively slow.

Combinatorial patchworking, introduced by Viro~\cite{Viro:1984,2006:Viro:PatchworkingRealAlgebraicVarieties}, provides a discrete setting in which this computation becomes more tractable.
A regular unimodular triangulation of the dilated simplex $d \cdot \Delta_2$ together with a sign distribution on its lattice points determines a real algebraic curve (a \emph{T-curve}) whose topology depends only on this combinatorial data.
The trade-off is expressiveness: not every real scheme is realizable as a T-curve~\cite{1993:Itenberg:Ragsdale}, although through degree seven all nonempty ones are~\cite{2026:GeiselmannEtAl:121PatchworkedDeg7}.
Viro also describes a procedure for recovering the topology of a patchworked curve~\cite[Algorithms~1.4.A/C/E]{2006:Viro:PatchworkingRealAlgebraicVarieties}: a sequence of geometric surgery operations --- cutting, gluing, and contracting polygons --- that yields a topological model of the curve embedded in $\RP^2$, from which the real scheme can be read off by inspection.
This procedure operates on continuous objects and specifies no discrete data structures, pseudocode, or complexity bounds.

Existing computational tools address aspects of patchworking topology.
The Combinatorial Patchworking Tool~\cite{CombinatorialPatchworkingTool} visualizes patchworked plane curves and counts their connected components.
\texttt{Viro.sage}~\cite{Viro.sage} and \polymake~\cite{2020:JoswigVater:RealTropicalHyperfacesPolymake} compute homological invariants of patchworked hypersurfaces in arbitrary dimension,%
\footnote{\texttt{Viro.sage} computes integral homology via Smith normal form on an explicit simplicial complex; \polymake computes $\mathbb{Z}/2$-Betti numbers via rank computation over $\GF{2}$ on a cellular chain complex derived directly from the combinatorial data.}
which for plane curves reduce to connected component counts and carry no nesting information.
On a related but distinct invariant (the \emph{complex scheme}), De~Loera and Wicklin~\cite{1998:DeLoera-Wicklin} describe a combinatorial algorithm for the dividing type of a T-curve (Type~I vs.\ Type~II, credited to Itenberg--Viro), which represents the closest prior algorithmic work on patchwork-derived invariants.
To the best of our knowledge, no prior published work provides a competitive algorithm that extracts the full real scheme from the combinatorial data.

\medskip\noindent\textbf{Contributions.}
We describe an algorithm that computes the real scheme of a combinatorial patchwork in near-quadratic time in the degree using union-find for connected components, traversal of the region adjacency graph, and rooted-tree canonicalization for the output (Section~\ref{sec:algorithm}).
This enables exhaustive enumeration of patchworks with a given triangulation, a process that was essential for the recent
verification that all 121 real schemes in degree seven are realizable~\cite{2026:GeiselmannEtAl:121PatchworkedDeg7}.
A publicly available C++ implementation is available at \url{github.com/polymake/libisotopy}.
Since the algorithm requires only fixed-size, thread-local state, a GPU-accelerated implementation can classify billions of patchworks per second.
Section~\ref{sec:enumeration} presents results obtained with these implementations for degrees up to eight.

\section{Algorithm}\label{sec:algorithm}

We recall Viro's combinatorial patchwork construction, which defines a
piecewise-linear (PL) curve in the real projective plane $\RP^2$ from purely
combinatorial data, and then present an
algorithm that computes the real scheme of this curve.
The \emph{lattice point set} of degree $d$ is
\begin{equation}\label{eq:lattice}
  A \;=\; d \cdot \Delta_2 \cap \ZZ^2
    \;=\; \bigl\{(i,j) \in \ZZ^2 : i,j \geq 0,\; i+j \leq d\bigr\} \,,
\end{equation}
where $\Delta_2 = \conv\{(0,0),(1,0),(0,1)\}$ is the standard triangle;
$A$ has $\tfrac{1}{2}(d{+}1)(d{+}2)$ elements.
A triangulation $\cT$ of $A$, represented by its edge list, is
\emph{unimodular} if each triangle has Euclidean area~$\tfrac{1}{2}$; equivalently,
every point in~$A$ occurs as a vertex of~$\cT$.
It is \emph{regular} if there exists a lifting function
$\omega \colon A \to \ZZ$ such that $\cT$ is the projection of the lower
convex hull of the lifted points
$\{(i,j,\omega(i,j)) : (i,j) \in A\}$.

Given a triangulation $\cT$ and a \emph{sign distribution}
$\sigma \colon A \to \GF{2}$, the patchworking
construction~\cite[Algorithms~1.4.A/C/E]{2006:Viro:PatchworkingRealAlgebraicVarieties}
proceeds as follows.
Reflecting $\cT$ into four quadrants yields a triangulation $\cT^\diamond$ of
the \emph{diamond}
$A^\diamond = \{(i,j) \in \ZZ^2 : |i|+|j| \leq d\}$, and
$\sigma$ extends from $A$ to $A^\diamond$ by the rule
\begin{align}\label{eq:signs}
  \sigma(i,-j)  &\equiv \sigma(i,j) + j \pmod{2}\,,          \notag \\
  \sigma(-i,j)  &\equiv \sigma(i,j) + i \pmod{2}\,,          \notag \\
  \sigma(-i,-j) &\equiv \sigma(i,j) + i + j \pmod{2}
\end{align}
for $(i,j) \in A$; this rule is forced by the parity structure of monomials
$x^i y^j$ under coordinate reflections.
Identifying antipodal boundary points, $(i,d{-}i) \sim -(i,d{-}i)$ and
$(i,i{-}d) \sim -(i,i{-}d)$ for $0 \leq i \leq d$, yields a cell
decomposition $\cS$ of $\RP^2$.
The \emph{patchworked curve} $\cC(\cT,\sigma)$ is obtained by connecting
the midpoints of edges that separate opposite signs;
this produces a PL~curve in the first barycentric subdivision of~$\cS$.
Combinatorially, $\cC(\cT,\sigma)$ is the subgraph of the dual graph
of~$\cS$ formed by edges whose endpoints have distinct signs.
A real algebraic curve \emph{ambient isotopic} (deformable by a continuous
motion of $\RP^2$) to $\cC(\cT,\sigma)$ is called a \emph{T-curve}.
Viro established that such a curve exists whenever $\cT$ is regular.
We additionally require $\cT$ to be unimodular, which ensures that the
resulting curve is smooth.

\input{tikz/fig-pipeline}

\begin{theorem}[{Viro~\cite{Viro:1984,2006:Viro:PatchworkingRealAlgebraicVarieties}; cf.~\cite[Theorem~1]{2026:GeiselmannEtAl:121PatchworkedDeg7}}]%
  \label{thm:patchworking}
  Let $\cT$ be a regular and unimodular triangulation of~$A$ with lifting
  function $\omega \colon A \to \ZZ$, and let $\sigma \colon A \to \GF{2}$.
  Then there exists $t_0 > 0$ such that for all $t \in (0,t_0]$ the curve
  $V_\RR(f) \subset \RP^2$, where
  $f = \sum_{(i,j) \in A} (-1)^{\sigma(i,j)}\, t^{\omega(i,j)}\,
    x^i y^j z^{d-i-j}$,
  is ambient isotopic to $\cC(\cT,\sigma)$.
\end{theorem}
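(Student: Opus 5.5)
The plan is the classical Viro argument: reduce to the positive quadrant by symmetry, analyze the curve there through the tropical limit $t \to 0$, and glue the local pictures along the triangulation.

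\emph{Reduction to the torus and the four quadrants.}
The coordinate reflections $(x,y)\mapsto(\pm x,\pm y)$ carry $f$ to the polynomial with signs given by \eqref{eq:signs}. So it suffices to understand the zero set of $f$ in the open positive quadrant $(\RR_{>0})^2$ of the affine chart $z=1$. The same analysis applies to each of the four polynomials $f(\pm x,\pm y)$, and the pieces are then glued along the coordinate axes and the line at infinity.

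\emph{Logarithmic coordinates.}
On $(\RR_{>0})^2$, substitute $x = t^{-u}$, $y=t^{-v}$. The monomial $t^{\omega(i,j)}x^iy^j$ then has size $t^{\omega(i,j)-iu-jv}$. Because $\cT$ is the regular subdivision induced by~$\omega$, for each $(u,v)$ the dominant monomials as $t\to 0$ are those on the face of the lower hull selected by the linear functional $(i,j)\mapsto iu+jv$.

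This gives a polyhedral decomposition of the $(u,v)$-plane dual to $\cT$:
\begin{itemize}
\item regions dual to vertices of~$\cT$, where one monomial dominates, so there are no zeros;
\item strips around edges of~$\cT$, where two monomials dominate;
\item neighbourhoods of vertices dual to triangles, where three monomials dominate.
\end{itemize}

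\emph{Local models.}
By unimodularity, each triangle $\tau$ is $\mathrm{GL}_2(\ZZ)$-equivalent to the standard simplex. Hence the truncation $f^\tau$ becomes $a+bX+cY$ after a monomial change of coordinates, which is an isomorphism of $(\RR_{>0})^2$. The positive zero set of $f^\tau$ is empty if all three signs agree. Otherwise it is a single smooth arc separating the vertex of the odd sign, isotopic to the segment joining the midpoints of the two sign-changing edges.

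On an edge strip, $f$ is approximately a binomial. It has a zero exactly when the two signs differ, and there the zero set is a graph transverse to the strip.

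The key estimate is a quantitative one. Pick compact pieces covering the $(u,v)$-plane, each of diameter independent of~$t$ after rescaling by $\log(1/t)$. On each piece, show that $f = f^\tau(1+o(1))$ with $C^1$-small error. Using smoothness of $V(f^\tau)$, which is the nondegeneracy guaranteed by unimodularity, apply the implicit function theorem or a Thom isotopy lemma. Together these show that $V(f)$ is isotopic to $V(f^\tau)$ on the piece for all sufficiently small~$t$. Compactness, from the finitely many pieces, then yields a uniform $t_0$.

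\emph{Gluing and the boundary.}
The local isotopies must then be matched across overlaps, which is standard once transversality on the overlaps is established. The harder part is the behaviour near the coordinate axes and at infinity, where the unbounded cells of the dual decomposition correspond to boundary edges of $d\cdot\Delta_2$. There I would restrict $f$ to each boundary edge; for example, $f(x,0,z)$ is a univariate patchworked polynomial. The same argument in one dimension, using Descartes-type sign counting, shows that its roots correspond exactly to sign changes along the boundary edge. A collar argument then shows that $V(f)$ meets each axis transversally at those points.

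The identification of antipodal boundary points in $\cS$ matches the chart structure of $\RP^2$ at infinity: the points $(x:y:0)$ and $(-x:-y:0)$ coincide, which explains the parity twist by $d$ in \eqref{eq:signs}. Assembling the four quadrants with these identifications gives an ambient isotopy of $\RP^2$ carrying $V_\RR(f)$ to $\cC(\cT,\sigma)$.

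I expect the \textbf{main obstacle} to be the uniform control near the boundary and the corners of the diamond, where several charts meet. I would handle this by choosing affine charts adapted to each vertex of $d\cdot\Delta_2$ and repeating the torus analysis in each chart, so that every point of $\RP^2$ lies in the positive torus of some signed chart.
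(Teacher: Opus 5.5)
The paper gives no proof of this theorem. It is quoted from Viro and from Theorem~1 of the companion paper, so the benchmark is Viro's classical argument. Your outline follows that argument's architecture: regularity makes the truncation $f^\tau$ dominate on the region dual to~$\tau$, unimodularity turns $f^\tau$ into $a+bX+cY$, and the quadrants are related by~\eqref{eq:signs}. You phrase it in logarithmic coordinates rather than Viro's moment-map charts.

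The step that fails is your fix for what you rightly call the main obstacle. The affine charts $z=1$, $x=1$, $y=1$ attached to the vertices of $d\cdot\Delta_2$ all have the same torus $\{xyz\neq 0\}$. So no point of the three coordinate lines lies in the positive torus of any signed chart. Yet these lines are exactly the seams where your four quadrant pictures and the antipodal identification are glued, i.e.\ where $\cC(\cT,\sigma)$ crosses the edges of the diamond.

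Restricting $f$ to an axis and counting sign changes only locates $V_\RR(f)\cap\{y=0\}$ and gives transversality at those points. It does not control $V_\RR(f)$ between the axis and your compact pieces. In the rescaled picture that region is the non-compact end $v\to-\infty$ of the rays dual to boundary edges, so finitely many compact pieces cannot cover the $(u,v)$-plane. Moreover, isotopies built separately in two adjacent quadrants need not agree on their common axis, so they do not assemble into a homeomorphism of $\RP^2$.

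The missing ingredient is a local model on a partial torus that straddles the axis. Take $\tau=\conv\{(k,0),(k+1,0),(m,1)\}$ and let $\ell_\tau$ be the affine function agreeing with $\omega$ on~$\tau$. The substitution $X=t^{\omega(k+1,0)-\omega(k,0)}x$, $Y=t^{\omega(m,1)-\omega(k,0)}x^{m-k}y$ is a diffeomorphism of $\RR_{>0}\times\RR$ taking $\{y=0\}$ to $\{Y=0\}$, and
\[
  t^{-\omega(k,0)}x^{-k}f(x,y,1) \;=\; \pm 1\pm X\pm Y \;+\; \sum_{(i,j)\in A\setminus\tau} \pm\, t^{\omega(i,j)-\ell_\tau(i,j)}\, X^{\,i-k-j(m-k)}\, Y^{\,j} .
\]
All $t$-exponents here are positive by regularity, and $j\ge 0$. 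Hence the convergence is $C^1$ on compacta of $\RR_{>0}\times\RR$. A single compact rectangle in $(X,Y)$ then contains the whole stretch of $V_\RR(f)$ along the ray dual to $[(k,0),(k+1,0)]$, its transversal crossing of the axis, and its continuation into the adjacent quadrant, all handled by one isotopy. The line at infinity is treated the same way in the charts of the other vertices, and near the three coordinate points a vertex monomial dominates.

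Viro builds this in by working on the closed triangle. The weighted moment map $\nabla\log\sum_{a\in A}t^{\omega(a)}e^{\langle a,s\rangle}$, with $s=(\log x,\log y)$, extends to a face-respecting homeomorphism from the nonnegative part of $\RP^2$ onto $d\cdot\Delta_2$. It is also the map you need to carry your dual $(u,v)$-picture onto $\cT$, which is required before you can compare with $\cC(\cT,\sigma)$ at all.

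Two smaller repairs are needed. First, ``$f=f^\tau(1+o(1))$'' cannot hold near $V(f^\tau)$. What you need is an additive bound $f-f^\tau=o(M_\tau)$ in logarithmic $C^1$-norm, where $M_\tau$ is the largest $\tau$-monomial, set against $|f^\tau|+|x\partial_x f^\tau|+|y\partial_y f^\tau|\ge c\,M_\tau$. Second, uniformity in~$t$ comes from the gap $\min_{(i,j)\in A\setminus\tau}(\omega-\ell_\tau)(i,j)>0$, not from compactness, since pieces of fixed size in $(u,v)$ are not compact in the chart variables.
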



Our goal is to compute the \emph{real scheme} of $\cC(\cT,\sigma)$ from the
combinatorial data $(\cT,\sigma)$.
The patchworked curve is a disjoint union of simple closed curves in~$\RP^2$.
Each of these is either an \emph{oval} (if it separates $\RP^2$) or a
\emph{pseudoline} (if it does not).
For even~$d$, every curve component is an oval; for odd~$d$, there is exactly
one
pseudoline~\cite[Proposition~3]{2026:GeiselmannEtAl:121PatchworkedDeg7}.
By Harnack's theorem~\cite{1876:Harnack}, the total number of connected components is at
most $M = \tfrac{1}{2}(d{-}1)(d{-}2) + 1$.

We call the connected components of $\RP^2 \setminus \cC(\cT,\sigma)$
\emph{regions}.
Since the ovals are pairwise disjoint and each separates $\RP^2$ into an
interior and an exterior, their nesting defines a tree structure on the
regions.
This is expressed by the \emph{region adjacency graph}, which has one vertex
per region and one edge per connected component of the curve, connecting the
two regions on either side (a self-loop when both sides coincide, as for the
pseudoline).
The interior of each oval is a disk, and the exterior region is a Möbius strip, which
serves as the root of this tree.
For even~$d$, this is the exterior of all ovals; for odd~$d$, it is the
region whose closure contains the pseudoline.
The \emph{real scheme} captures the ambient isotopy class of
$\cC(\cT,\sigma)$ in $\RP^2$ through the isomorphism class of this rooted tree, encoded in \emph{Rohlin--Viro notation}:
$\scheme{0}$ denotes the empty scheme (even~$d$), $\scheme{k}$ denotes $k$
unnested ovals, $\scheme{X \sqcup Y}$ the disjoint union of two subschemes $X$ and $Y$,
$\scheme{k\scheme{X}}$ denotes $k$ ovals all of whose interiors have scheme~$X$, and $J$ marks
the unique pseudoline for odd~$d$.


Recall that $\cC(\cT,\sigma)$ arises as a subgraph in the first barycentric subdivision of~$\cS$.
Its connected components are easily identified.
To derive the real scheme, it remains to determine which of them are separating and to distinguish the interior from the exterior of each oval; one way to accomplish this is by $(\operatorname{mod} 2)$ homology computations.
A na\"{\i}ve implementation has an estimated complexity of $O(d^5)$: there are $O(d^2)$ connected components; each requires one homology computation, e.g., via mod 2 Gauß elimination in $O(d^3)$ time.


Our method for extracting the real scheme from $(\cT,\sigma)$ works on $\cT^\diamond$ directly.
We call an edge of $\cT^\diamond$ \emph{crossed} if its endpoints have
distinct signs, and define a \emph{component} of $(\cT,\sigma)$ as a
connected component of the subgraph induced by the non-crossed edges; each
component is monochromatic by construction.
The crossed edges define a first notion of adjacency on the components.
To pass from $\RR^2$ to $\RP^2$, we identify each of the $4d$ boundary
vertices of $A^\diamond$ with its antipodal partner, merging any two
components that share such a pair; this defines a second, antipodal notion
of adjacency.
The coarsened partition of components coincides with the regions of $\RP^2
\setminus \cC(\cT,\sigma)$ defined above.%
\footnote{Note that antipodal boundary vertices have opposite signs if and only if $d$ is odd.}
The non-orientable root region is detected as follows: for odd~$d$, it is the
unique region containing two components that are crossed-edge adjacent; for
even~$d$, it is the unique region whose components form an odd cycle under
antipodal adjacency.
Algorithm~\ref{alg:real-scheme} summarizes the procedure;
see Figure~\ref{fig:pipeline} for two examples.

\begin{algorithm}[t]
\caption{Real scheme of a combinatorial patchwork.}\label{alg:real-scheme}
\begin{algorithmic}[1]
\Require Edge list $E(\cT)$, sign distribution
  $\sigma \colon A \to \GF{2}$, degree~$d$
\Ensure Canonical real scheme of $\cC(\cT,\sigma)$ in Viro notation
\Statex
\Statex \textbf{Signed reflected triangulation.}
\State Construct $A^\diamond$ and $E(\cT^\diamond)$ by reflecting into four
  quadrants
\State Extend $\sigma$ to $A^\diamond$ via~\eqref{eq:signs}
\Statex
\Statex \textbf{Components and regions.}
\State Compute connected components of same-sign vertices in $\cT^\diamond$
\State Coarsen into regions by identifying antipodal boundary pairs
\Statex
\Statex \textbf{Root and real scheme.}
\State Build region adjacency graph from crossed edges of $\cT^\diamond$
\If{$d$ is even}
  \State $r \gets$ unique region with an odd cycle under antipodal adjacency
\Else
  \State $r \gets$ unique region containing two crossed-edge-adjacent
    components
\EndIf
\State \Return canonical form of the region adjacency graph rooted at~$r$
\end{algorithmic}
\end{algorithm}

\begin{theorem}\label{thm:algorithm}
  Algorithm~\ref{alg:real-scheme} computes the real scheme of
  $\cC(\cT,\sigma)$ in $O(d^2 \cdot \alpha(d^2))$ time, where $\alpha$
  denotes the inverse Ackermann function.
\end{theorem}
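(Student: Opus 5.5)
The proof splits into correctness and running time, and I treat them separately.

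\textbf{Correctness.} I would argue in three steps.

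First, regions. Every vertex of $\cT^\diamond$ lies in exactly one region of $\RP^2\setminus\cC(\cT,\sigma)$. Indeed, $\cC$ meets each edge of $\cT^\diamond$ in at most its midpoint, and only when the edge is crossed. Two endpoints of a non-crossed edge are therefore joined by a path avoiding $\cC$.

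Conversely, consider a triangle of $\cT^\diamond$. Its complement to $\cC$ consists of pieces, each containing the vertices of one sign. This holds because $\cC$ restricted to a triangle is a single segment, or empty. Hence every region deformation retracts, via the barycentric subdivision, onto the union of its vertices and non-crossed edges. On the boundary of the diamond, this union is glued exactly along the antipodal identifications of $\cS$. It follows that components merged by antipodal pairs are precisely the regions.

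Second, the region adjacency graph. Each arc of $\cC$ crosses some crossed edge, and the two sides of that arc locally contain its two endpoints. Hence each curve component yields an edge between the regions of those endpoints. Several crossed edges belonging to the same curve component give the same region pair, so these edges are deduplicated.

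Third, the root. For odd $d$, the pseudoline $J$ is one-sided-adjacent to a single region, which appears as a self-loop. In the graph, this shows up as two components of the same region being joined by a crossed edge. For even $d$, only the Möbius strip is non-orientable. Orientability of a region can be read off from the parity of the antipodal gluings along cycles of its component graph, since each antipodal identification reverses orientation. An odd cycle therefore certifies non-orientability, and disks admit no odd cycle. Uniqueness in both cases comes from the description of the real scheme recalled above: all oval interiors are disks, and exactly one Möbius region exists.

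\textbf{Running time.} $|A^\diamond|=O(d^2)$, and a triangulation has $O(d^2)$ edges. The steps cost as follows.
\begin{itemize}
\item Reflection and sign extension are linear in the input.
\item Computing components needs $O(d^2)$ union operations, costing $O(d^2\alpha(d^2))$.
\item The $4d$ antipodal merges fall within the same bound.
\item Building the adjacency graph scans all edges once. Duplicate edges can be removed by bucketing on region pairs, or avoided by noting that the graph has at most $M+1$ vertices and is a tree after deduplication.
\item Finding the root uses a 2-colouring BFS of the antipodal graph, which has $O(d)$ edges.
\item Canonical rooted-tree encoding uses AHU, linear in the tree size $O(d^2)$ when labels are sorted via radix/bucket sorting.
\end{itemize}

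\textbf{Main obstacle.} The delicate step is the even-$d$ root criterion. I must show rigorously that an odd antipodal cycle exists exactly for the non-orientable region. I would attempt this by computing $w_1$ of a region via its retract graph: planar edges preserve the local orientation inherited from $\RR^2$, while each antipodal gluing reverses it. So the holonomy around a cycle equals the parity of the number of antipodal edges in it. The remaining point is that an odd such cycle must exist in the Möbius region. I would establish this by exhibiting a path through the region from a boundary point to its antipode, i.e., a one-sided loop, which the retraction carries into the component graph.
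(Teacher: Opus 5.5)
Your proposal is correct and follows the same route as the paper's proof. Same-sign components merged across antipodal pairs give the regions, crossed edges give the region adjacency graph, and the root is the self-loop region for odd $d$ or the region with an odd antipodal cycle (via orientation reversal) for even $d$; union-find plus linear-time AHU canonicalization then gives the bound. You also supply details the paper delegates to the companion paper or leaves implicit, notably that the M\"obius region really contains an odd cycle while the remaining regions, which lie inside oval disks, contain none.

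One small correction: a region deformation retracts onto the full subcomplex spanned by its vertices, which includes monochromatic triangles, not onto its vertices and non-crossed edges alone. This does not affect your argument, since you only use connectivity and loops in the $1$-skeleton.
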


\begin{proof}
Removing the crossed edges from $\cT^\diamond$ partitions its vertices into
same-sign connected components; after antipodal boundary identification these
induce the regions of $\RP^2 \setminus
\cC(\cT,\sigma)$~\cite[{\S}2.4]{2026:GeiselmannEtAl:121PatchworkedDeg7}.
Each crossed edge connecting vertices in two distinct regions corresponds to
an oval separating those regions (or the pseudoline for odd~$d$), so the
graph built in Line~5 is the region adjacency graph defined above.
For root detection: when $d$ is odd, the pseudoline is nonseparating, so both
sides belong to the same region, producing the self-loop that identifies the
root; when $d$ is even, every curve component is an oval, and the
non-orientable root region is detected as the unique region whose components
form an odd cycle under antipodal identification: each identification reverses
orientation, so an odd number of reversals makes the region non-orientable.
Since the region adjacency graph is a tree (with a self-loop only at the root
for odd~$d$), traversal from the root and canonicalization recover the real
scheme.
Neither regularity nor unimodularity of $\cT$ is needed for this computation,
only for the algebraic existence guarantee of Theorem~\ref{thm:patchworking}.

For complexity, the reflected triangulation has $O(d^2)$ vertices and edges.
Union-find with path compression and union by rank~\cite{1975:Tarjan:UnionFind}
(Lines~3--4) runs in
$O(d^2 \cdot \alpha(d^2))$ time; Line~4 adds $O(d)$ union operations for
boundary pairs.
Building the region adjacency graph (Line~5) scans $O(d^2)$ edges; root
identification (Lines~6--9) takes $O(d)$ time.
The rooted region adjacency graph has at most $M + 1$ nodes for even~$d$
and at most $M$ for odd~$d$ (where $M$ is given by Harnack's bound), and
is canonicalized in linear time, e.g., via the algorithm in~\cite{1974:AhoHopcroftUllman}.
The total is $O(d^2 \cdot \alpha(d^2))$, dominated by union-find.
\hfill$\square$
\end{proof}

\begin{table}[t]
	\centering
	\caption{Search space parameters by degree.
		$|E(\cT^\diamond)|$ the number of edges in the reflected graph;
		$M$ is Harnack's bound.
		Regular unimodular triangulations up to $\Sym{3}$-symmetry (symmetric representatives,
		i.e., those fixed by $(x,y) \mapsto (y,x)$, in parentheses).
		Sign distributions up to the $(\ZZ/2)^3$-action, giving
		$2^{|A|-3}$ per triangulation.}\label{tab:search-space}
	\smallskip
	\begin{tabular}{r@{\quad}r@{\quad}r@{\quad}r@{\quad}r@{\quad}r}
		\hline
		$d$ & $|A|$ & $|E(\cT^\diamond)|$ & $M$
		& triangulations (symmetric) & sign distributions \\
		\hline
		\\[-1.8ex]
		2 & 6   & 28  & 1  & 2 \,(2)              & 8       \\
		3 & 10  & 60  & 2  & 18 \,(7)             & 128     \\
		4 & 15  & 104 & 4  & 1\,278 \,(74)        & 4\,096  \\
		5 & 21  & 160 & 7  & 561\,885 \,(1\,194)  & 262K    \\
		6 & 28  & 228 & 11 & 1\,198\,202\,590 \,(62\,960)       & 34M  \\
		7 & 36  & 308 & 16 & {--} \,(4\,728\,133)  & 8.6B \\
		8 & 45  & 400 & 22 & {--} \,(1\,199\,795\,773) & 4.4T \\
		\hline
	\end{tabular}
\end{table}

\section{Exhaustive enumeration}\label{sec:enumeration}

Algorithm~\ref{alg:real-scheme} classifies a single pair $(\cT,\sigma)$.
In order to explore the real schemes which are realizable as T-curves for a given
degree~$d$ it is useful to enumerate triangulations and sign
distributions systematically.
Under a natural equivalence relation, the number of sign distributions is reduced by a factor of eight, and triangulation orbits generally contain six elements, three for symmetric triangulations~\cite[Section~3]{2026:GeiselmannEtAl:121PatchworkedDeg7}.%
\footnote{Fixing a triangulation and sweeping over sign distribution representatives yields all types realizable by that orbit of triangulations. The converse --- fixing a sign distribution and sweeping over triangulation representatives --- does not, since the triangulation equivalence also acts on the signs.}
Table~\ref{tab:search-space} summarizes the resulting counts.
This explains our predominant search mode: check all sign distributions for one or a few triangulations.
Here, orchestration overhead is minimal and throughput is near the compute floor; as a side benefit, this yields the distribution of the number of ovals across all sign distribution classes (Figure~\ref{fig:distributions}).
For degrees where checking all triangulations is not feasible, we employed a second mode: check all symmetric triangulations for one or a few sign distributions, at the cost of generating triangulations and allocating memory for each.

For $d \leq 5$, exhaustively checking all triangulation orbits against all
sign distributions is fast.
For degree six, the full process over all triangulation orbits is just
feasible at roughly $10\,000$ GPU-hours but unlikely to produce valuable insights.
Restricting to equivalence classes containing a symmetric triangulation makes this feasible.
Through these searches, all nonempty real schemes are realized as T-curves for
$d \leq 5$ (a single triangulation suffices), and two
triangulations suffice to cover all $55$ nonempty real schemes of degree
six~\cite[Theorem 20]{2026:GeiselmannEtAl:121PatchworkedDeg7}.

For degree seven, exhaustive sign distribution sweeps over four hand-designed triangulations realize all $121$ nonempty real schemes~\cite[Theorem 24]{2026:GeiselmannEtAl:121PatchworkedDeg7}.
One of these triangulations already produces $115$ of the $121$ real schemes.

For degree eight, prior work has focused on real schemes with the maximum
number of ovals given by Harnack's bound ($22$ ovals for $d = 8$).
At most $89$ are realizable~\cite{2002:Orevkov:FlexibleMcurvesDeg8}, of which
$83$ have algebraic constructions, leaving six open.
Beyond the maximum case, no systematic classification of degree eight real
schemes exists.
Our approach combined two strategies to build an initial pool of triangulations and realized types: simulated annealing searches over triangulations, optimizing for a target $(p,n)$-count or for the number of distinct types realized, and hand-designed triangulations guided by a curated pool of sign distributions.
We then iterated: exhaustively check all sign distributions for the selected triangulations and extract one witness sign distribution per newly realized type, then sweep all symmetric triangulations against the accumulated witnesses. Repeat until convergence.
This iterative process realized exactly $2359$ distinct real schemes with representatives at
every oval count from $1$ to $22$, of which only $30$ are maximum.
Among maximum real schemes, the six left open by Orevkov remain unresolved,
and $53$ others have no known T-curve realization.

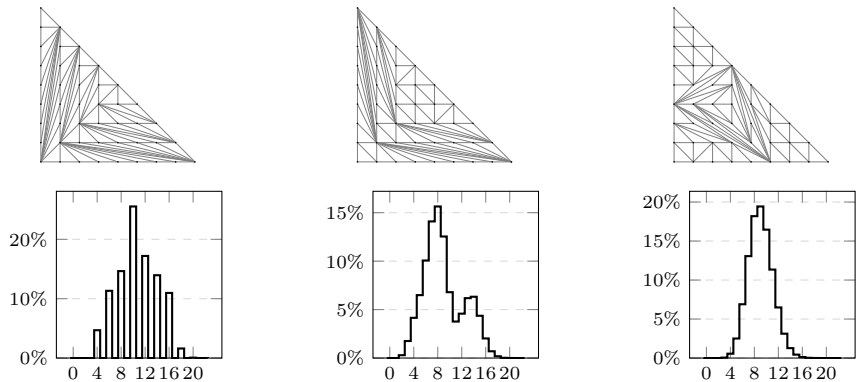
\begin{figure}[t]
\centering
\begin{minipage}[t]{0.31\textwidth}\centering
  \hspace{0.5em}\input{tikz/bow_tie_wireframe}
\end{minipage}\hfill
\begin{minipage}[t]{0.31\textwidth}\centering
  \hspace{0.5em}\input{tikz/rank016_wireframe}
\end{minipage}\hfill
\begin{minipage}[t]{0.31\textwidth}\centering
  \hspace{0.5em}\input{tikz/rank015_wireframe}
\end{minipage}
\par\vspace{8pt}
\begin{minipage}[t]{0.31\textwidth}\centering
  \input{tikz/bow_tie_histogram}
\end{minipage}\hfill
\begin{minipage}[t]{0.31\textwidth}\centering
  \input{tikz/rank016_histogram}
\end{minipage}\hfill
\begin{minipage}[t]{0.31\textwidth}\centering
  \input{tikz/rank015_histogram}
\end{minipage}
\caption{Three degree-eight triangulations of $A=8\cdot\Delta_{2}\cap\ZZ^2$ (top, first
  quadrant shown) and the distribution of the number of ovals over all
  $2^{42}$ sign distributions (bottom).
  The first is the bow tie triangulation~\cite[Section~4.3]{2026:GeiselmannEtAl:121PatchworkedDeg7}, which realizes only even oval counts.
  The three triangulations realize $123$, $359$, and $353$ real schemes,
  respectively.}\label{fig:distributions}
\end{figure}

Since Algorithm~\ref{alg:real-scheme} classifies each pair $(\cT, \sigma)$
independently, exhaustive enumeration is embarrassingly parallel, and since
all data structures are degree-bounded and fit in thread-local memory,
it is naturally suited to GPU execution.
We implemented both a minimal, single-patchwork-focused C++ library
(\texttt{libisotopy}) and a
companion Rust implementation focused on throughput for large exhaustive
searches across one or more GPUs, with both CUDA and Metal backends.
At scale, orchestration overhead --- memory transfer, histogram aggregation,
kernel dispatch --- and data collection become the computationally challenging
aspects rather than the classification itself.
At the compute floor (all overhead amortized), a single NVIDIA A100 classifies
roughly $10^8$ pairs per second for degree eight, with the exact rate
depending on the triangulation.
An exhaustive sweep over all $2^{42}$ sign distributions for one
degree-eight triangulation thus takes on the order of hours on a single GPU;
multi-GPU execution scales linearly.

\section{Conclusion}\label{sec:conclusion}

We presented an algorithm for computing the real scheme of a combinatorial
patchwork in near-quadratic time.
The key ingredients are a union-find computation on the reflected signed
triangulation, a bipartiteness check on the antipodal component graph for root
identification, and the eightfold symmetry of the sign distribution space that makes
exhaustive sweeps tractable.
Two implementations are provided: \texttt{libisotopy}, a publicly available
C++ library, and a multi-GPU Rust implementation with
CUDA and Metal backends.

Two directions stand out for future work.
First, our algorithm computes the real scheme but not the finer
\emph{complex scheme}, which additionally records dividing type and complex
orientations of ovals; algorithms were described by
De~Loera and Wicklin~\cite{1998:DeLoera-Wicklin} and Kaihnsa et al.\ \cite{2019:KaihnsaEtAl:64curves}; but neither invariant seems to have a
general-purpose implementation.
Second, a central open question is whether degree eight is where the
limitations of T-curves become apparent.
Our results so far point in this direction: we realize only $30$ of $89$
maximum real schemes as T-curves, and the gap between the $2359$ real schemes
found and the (unknown) total may be substantial --- a qualitative departure
from degrees through seven, where every nonempty real scheme is a T-curve.
Our exploration of the patchwork search space is far from exhaustive, however,
and future searches may cover more.
Even if T-curves fall short, constructive approaches beyond classical
patchworking may provide realizations for some or all of the remaining cases.

\begin{credits}
\subsubsection{\ackname}
Funded by the Deutsche Forschungsgemeinschaft (DFG, German Research
Foundation) under Germany's Excellence Strategy -- ``The Berlin Mathematics
Research Center MATH+'' (EXC-2046/1, EXC-2046/2, project ID 390685689),
``Symbolic Tools in Mathematics and their Application'' (TRR~195, project ID
286237555), ``Mathematical Modelling, Simulation and Optimization Using the
Example of Gas Networks'' (SFB/TRR~154, project ID 239904186),
``Mathematical Research Data Initiative (MaRDI)'' (project ID 460135501) as
well as by the German Federal Ministry of Research, Technology and Space
(Research Campus MODAL, fund number 05M14ZAM, 05M20ZBM) and the VDI/VDE
Innovation + Technik GmbH (fund number 16IS23025B).
\end{credits}

\printbibliography

\end{document}

%% file: shared-macros.tex
\newcommand{\RR}{\mathbb{R}}
\newcommand{\PP}{\mathbb{P}}
\newcommand{\ZZ}{\mathbb{Z}}

\newcommand{\GF}[1]{\mathbb{F}_{#1}}

\newcommand{\RP}{\RR\PP}

\newcommand{\cC}{\mathcal{C}}

\newcommand{\cS}{\mathcal{S}}
\newcommand{\cT}{\mathcal{T}}

\DeclareMathOperator\conv{conv}   

\newcommand\scheme[1]{\langle #1 \rangle}
\newcommand\Sym[1]{\mathfrak{S}_{#1}}       

\newcommand{\polymake}{\texttt{polymake}\xspace}


%% file: tikz/bow_tie_wireframe.tex
\begin{tikzpicture}[scale=0.256]
  \draw[black!50, line width=0.3pt] (0,0) -- (0,1);
  \draw[black!50, line width=0.3pt] (0,0) -- (1,0);
  \draw[black!50, line width=0.3pt] (0,0) -- (1,1);
  \draw[black!50, line width=0.3pt] (0,0) -- (1,2);
  \draw[black!50, line width=0.3pt] (0,0) -- (1,3);
  \draw[black!50, line width=0.3pt] (0,0) -- (1,4);
  \draw[black!50, line width=0.3pt] (0,0) -- (1,5);
  \draw[black!50, line width=0.3pt] (0,0) -- (1,6);
  \draw[black!50, line width=0.3pt] (0,0) -- (1,7);
  \draw[black!50, line width=0.3pt] (0,1) -- (0,2);
  \draw[black!50, line width=0.3pt] (0,1) -- (1,7);
  \draw[black!50, line width=0.3pt] (0,2) -- (0,3);
  \draw[black!50, line width=0.3pt] (0,2) -- (1,7);
  \draw[black!50, line width=0.3pt] (0,3) -- (0,4);
  \draw[black!50, line width=0.3pt] (0,3) -- (1,7);
  \draw[black!50, line width=0.3pt] (0,4) -- (0,5);
  \draw[black!50, line width=0.3pt] (0,4) -- (1,7);
  \draw[black!50, line width=0.3pt] (0,5) -- (0,6);
  \draw[black!50, line width=0.3pt] (0,5) -- (1,7);
  \draw[black!50, line width=0.3pt] (0,6) -- (0,7);
  \draw[black!50, line width=0.3pt] (0,6) -- (1,7);
  \draw[black!50, line width=0.3pt] (0,7) -- (0,8);
  \draw[black!50, line width=0.3pt] (0,7) -- (1,7);
  \draw[black!50, line width=0.3pt] (0,8) -- (1,7);
  \draw[black!50, line width=0.3pt] (1,0) -- (1,1);
  \draw[black!50, line width=0.3pt] (1,0) -- (2,0);
  \draw[black!50, line width=0.3pt] (1,1) -- (1,2);
  \draw[black!50, line width=0.3pt] (1,1) -- (2,0);
  \draw[black!50, line width=0.3pt] (1,1) -- (2,1);
  \draw[black!50, line width=0.3pt] (1,1) -- (2,2);
  \draw[black!50, line width=0.3pt] (1,1) -- (2,3);
  \draw[black!50, line width=0.3pt] (1,1) -- (2,4);
  \draw[black!50, line width=0.3pt] (1,1) -- (2,5);
  \draw[black!50, line width=0.3pt] (1,1) -- (2,6);
  \draw[black!50, line width=0.3pt] (1,1) -- (3,0);
  \draw[black!50, line width=0.3pt] (1,1) -- (4,0);
  \draw[black!50, line width=0.3pt] (1,1) -- (5,0);
  \draw[black!50, line width=0.3pt] (1,1) -- (6,0);
  \draw[black!50, line width=0.3pt] (1,1) -- (7,0);
  \draw[black!50, line width=0.3pt] (1,1) -- (8,0);
  \draw[black!50, line width=0.3pt] (1,2) -- (1,3);
  \draw[black!50, line width=0.3pt] (1,2) -- (2,6);
  \draw[black!50, line width=0.3pt] (1,3) -- (1,4);
  \draw[black!50, line width=0.3pt] (1,3) -- (2,6);
  \draw[black!50, line width=0.3pt] (1,4) -- (1,5);
  \draw[black!50, line width=0.3pt] (1,4) -- (2,6);
  \draw[black!50, line width=0.3pt] (1,5) -- (1,6);
  \draw[black!50, line width=0.3pt] (1,5) -- (2,6);
  \draw[black!50, line width=0.3pt] (1,6) -- (1,7);
  \draw[black!50, line width=0.3pt] (1,6) -- (2,6);
  \draw[black!50, line width=0.3pt] (1,7) -- (2,6);
  \draw[black!50, line width=0.3pt] (2,0) -- (3,0);
  \draw[black!50, line width=0.3pt] (2,1) -- (2,2);
  \draw[black!50, line width=0.3pt] (2,1) -- (3,1);
  \draw[black!50, line width=0.3pt] (2,1) -- (8,0);
  \draw[black!50, line width=0.3pt] (2,2) -- (2,3);
  \draw[black!50, line width=0.3pt] (2,2) -- (3,1);
  \draw[black!50, line width=0.3pt] (2,2) -- (3,2);
  \draw[black!50, line width=0.3pt] (2,2) -- (3,3);
  \draw[black!50, line width=0.3pt] (2,2) -- (3,4);
  \draw[black!50, line width=0.3pt] (2,2) -- (3,5);
  \draw[black!50, line width=0.3pt] (2,2) -- (4,1);
  \draw[black!50, line width=0.3pt] (2,2) -- (5,1);
  \draw[black!50, line width=0.3pt] (2,2) -- (6,1);
  \draw[black!50, line width=0.3pt] (2,2) -- (7,1);
  \draw[black!50, line width=0.3pt] (2,3) -- (2,4);
  \draw[black!50, line width=0.3pt] (2,3) -- (3,5);
  \draw[black!50, line width=0.3pt] (2,4) -- (2,5);
  \draw[black!50, line width=0.3pt] (2,4) -- (3,5);
  \draw[black!50, line width=0.3pt] (2,5) -- (2,6);
  \draw[black!50, line width=0.3pt] (2,5) -- (3,5);
  \draw[black!50, line width=0.3pt] (2,6) -- (3,5);
  \draw[black!50, line width=0.3pt] (3,0) -- (4,0);
  \draw[black!50, line width=0.3pt] (3,1) -- (4,1);
  \draw[black!50, line width=0.3pt] (3,1) -- (8,0);
  \draw[black!50, line width=0.3pt] (3,2) -- (3,3);
  \draw[black!50, line width=0.3pt] (3,2) -- (4,2);
  \draw[black!50, line width=0.3pt] (3,2) -- (7,1);
  \draw[black!50, line width=0.3pt] (3,3) -- (3,4);
  \draw[black!50, line width=0.3pt] (3,3) -- (4,2);
  \draw[black!50, line width=0.3pt] (3,3) -- (4,3);
  \draw[black!50, line width=0.3pt] (3,3) -- (4,4);
  \draw[black!50, line width=0.3pt] (3,3) -- (5,2);
  \draw[black!50, line width=0.3pt] (3,3) -- (6,2);
  \draw[black!50, line width=0.3pt] (3,4) -- (3,5);
  \draw[black!50, line width=0.3pt] (3,4) -- (4,4);
  \draw[black!50, line width=0.3pt] (3,5) -- (4,4);
  \draw[black!50, line width=0.3pt] (4,0) -- (5,0);
  \draw[black!50, line width=0.3pt] (4,1) -- (5,1);
  \draw[black!50, line width=0.3pt] (4,1) -- (8,0);
  \draw[black!50, line width=0.3pt] (4,2) -- (5,2);
  \draw[black!50, line width=0.3pt] (4,2) -- (7,1);
  \draw[black!50, line width=0.3pt] (4,3) -- (4,4);
  \draw[black!50, line width=0.3pt] (4,3) -- (5,3);
  \draw[black!50, line width=0.3pt] (4,3) -- (6,2);
  \draw[black!50, line width=0.3pt] (4,4) -- (5,3);
  \draw[black!50, line width=0.3pt] (5,0) -- (6,0);
  \draw[black!50, line width=0.3pt] (5,1) -- (6,1);
  \draw[black!50, line width=0.3pt] (5,1) -- (8,0);
  \draw[black!50, line width=0.3pt] (5,2) -- (6,2);
  \draw[black!50, line width=0.3pt] (5,2) -- (7,1);
  \draw[black!50, line width=0.3pt] (5,3) -- (6,2);
  \draw[black!50, line width=0.3pt] (6,0) -- (7,0);
  \draw[black!50, line width=0.3pt] (6,1) -- (7,1);
  \draw[black!50, line width=0.3pt] (6,1) -- (8,0);
  \draw[black!50, line width=0.3pt] (6,2) -- (7,1);
  \draw[black!50, line width=0.3pt] (7,0) -- (8,0);
  \draw[black!50, line width=0.3pt] (7,1) -- (8,0);
  \fill[black] (0,0) circle (1.2pt);
  \fill[black] (0,1) circle (1.2pt);
  \fill[black] (0,2) circle (1.2pt);
  \fill[black] (0,3) circle (1.2pt);
  \fill[black] (0,4) circle (1.2pt);
  \fill[black] (0,5) circle (1.2pt);
  \fill[black] (0,6) circle (1.2pt);
  \fill[black] (0,7) circle (1.2pt);
  \fill[black] (0,8) circle (1.2pt);
  \fill[black] (1,0) circle (1.2pt);
  \fill[black] (1,1) circle (1.2pt);
  \fill[black] (1,2) circle (1.2pt);
  \fill[black] (1,3) circle (1.2pt);
  \fill[black] (1,4) circle (1.2pt);
  \fill[black] (1,5) circle (1.2pt);
  \fill[black] (1,6) circle (1.2pt);
  \fill[black] (1,7) circle (1.2pt);
  \fill[black] (2,0) circle (1.2pt);
  \fill[black] (2,1) circle (1.2pt);
  \fill[black] (2,2) circle (1.2pt);
  \fill[black] (2,3) circle (1.2pt);
  \fill[black] (2,4) circle (1.2pt);
  \fill[black] (2,5) circle (1.2pt);
  \fill[black] (2,6) circle (1.2pt);
  \fill[black] (3,0) circle (1.2pt);
  \fill[black] (3,1) circle (1.2pt);
  \fill[black] (3,2) circle (1.2pt);
  \fill[black] (3,3) circle (1.2pt);
  \fill[black] (3,4) circle (1.2pt);
  \fill[black] (3,5) circle (1.2pt);
  \fill[black] (4,0) circle (1.2pt);
  \fill[black] (4,1) circle (1.2pt);
  \fill[black] (4,2) circle (1.2pt);
  \fill[black] (4,3) circle (1.2pt);
  \fill[black] (4,4) circle (1.2pt);
  \fill[black] (5,0) circle (1.2pt);
  \fill[black] (5,1) circle (1.2pt);
  \fill[black] (5,2) circle (1.2pt);
  \fill[black] (5,3) circle (1.2pt);
  \fill[black] (6,0) circle (1.2pt);
  \fill[black] (6,1) circle (1.2pt);
  \fill[black] (6,2) circle (1.2pt);
  \fill[black] (7,0) circle (1.2pt);
  \fill[black] (7,1) circle (1.2pt);
  \fill[black] (8,0) circle (1.2pt);
\end{tikzpicture}

%% file: tikz/rank016_wireframe.tex
\begin{tikzpicture}[scale=0.256]
  \draw[black!50, line width=0.3pt] (0,0) -- (0,1);
  \draw[black!50, line width=0.3pt] (0,0) -- (1,0);
  \draw[black!50, line width=0.3pt] (0,1) -- (0,2);
  \draw[black!50, line width=0.3pt] (0,1) -- (1,0);
  \draw[black!50, line width=0.3pt] (0,1) -- (1,1);
  \draw[black!50, line width=0.3pt] (0,2) -- (0,3);
  \draw[black!50, line width=0.3pt] (0,2) -- (1,1);
  \draw[black!50, line width=0.3pt] (0,3) -- (0,4);
  \draw[black!50, line width=0.3pt] (0,3) -- (1,1);
  \draw[black!50, line width=0.3pt] (0,4) -- (0,5);
  \draw[black!50, line width=0.3pt] (0,4) -- (1,1);
  \draw[black!50, line width=0.3pt] (0,5) -- (0,6);
  \draw[black!50, line width=0.3pt] (0,5) -- (1,1);
  \draw[black!50, line width=0.3pt] (0,6) -- (0,7);
  \draw[black!50, line width=0.3pt] (0,6) -- (1,1);
  \draw[black!50, line width=0.3pt] (0,7) -- (0,8);
  \draw[black!50, line width=0.3pt] (0,7) -- (1,1);
  \draw[black!50, line width=0.3pt] (0,8) -- (1,1);
  \draw[black!50, line width=0.3pt] (0,8) -- (1,2);
  \draw[black!50, line width=0.3pt] (0,8) -- (1,3);
  \draw[black!50, line width=0.3pt] (0,8) -- (1,4);
  \draw[black!50, line width=0.3pt] (0,8) -- (1,5);
  \draw[black!50, line width=0.3pt] (0,8) -- (1,6);
  \draw[black!50, line width=0.3pt] (0,8) -- (1,7);
  \draw[black!50, line width=0.3pt] (1,0) -- (1,1);
  \draw[black!50, line width=0.3pt] (1,0) -- (2,0);
  \draw[black!50, line width=0.3pt] (1,1) -- (1,2);
  \draw[black!50, line width=0.3pt] (1,1) -- (2,0);
  \draw[black!50, line width=0.3pt] (1,1) -- (2,1);
  \draw[black!50, line width=0.3pt] (1,1) -- (3,0);
  \draw[black!50, line width=0.3pt] (1,1) -- (4,0);
  \draw[black!50, line width=0.3pt] (1,1) -- (5,0);
  \draw[black!50, line width=0.3pt] (1,1) -- (6,0);
  \draw[black!50, line width=0.3pt] (1,1) -- (7,0);
  \draw[black!50, line width=0.3pt] (1,1) -- (8,0);
  \draw[black!50, line width=0.3pt] (1,2) -- (1,3);
  \draw[black!50, line width=0.3pt] (1,2) -- (2,1);
  \draw[black!50, line width=0.3pt] (1,2) -- (2,2);
  \draw[black!50, line width=0.3pt] (1,3) -- (1,4);
  \draw[black!50, line width=0.3pt] (1,3) -- (2,2);
  \draw[black!50, line width=0.3pt] (1,4) -- (1,5);
  \draw[black!50, line width=0.3pt] (1,4) -- (2,2);
  \draw[black!50, line width=0.3pt] (1,5) -- (1,6);
  \draw[black!50, line width=0.3pt] (1,5) -- (2,2);
  \draw[black!50, line width=0.3pt] (1,6) -- (1,7);
  \draw[black!50, line width=0.3pt] (1,6) -- (2,2);
  \draw[black!50, line width=0.3pt] (1,7) -- (2,2);
  \draw[black!50, line width=0.3pt] (1,7) -- (2,3);
  \draw[black!50, line width=0.3pt] (1,7) -- (2,4);
  \draw[black!50, line width=0.3pt] (1,7) -- (2,5);
  \draw[black!50, line width=0.3pt] (1,7) -- (2,6);
  \draw[black!50, line width=0.3pt] (2,0) -- (3,0);
  \draw[black!50, line width=0.3pt] (2,1) -- (2,2);
  \draw[black!50, line width=0.3pt] (2,1) -- (3,1);
  \draw[black!50, line width=0.3pt] (2,1) -- (8,0);
  \draw[black!50, line width=0.3pt] (2,2) -- (2,3);
  \draw[black!50, line width=0.3pt] (2,2) -- (3,1);
  \draw[black!50, line width=0.3pt] (2,2) -- (3,2);
  \draw[black!50, line width=0.3pt] (2,2) -- (4,1);
  \draw[black!50, line width=0.3pt] (2,2) -- (5,1);
  \draw[black!50, line width=0.3pt] (2,2) -- (6,1);
  \draw[black!50, line width=0.3pt] (2,2) -- (7,1);
  \draw[black!50, line width=0.3pt] (2,3) -- (2,4);
  \draw[black!50, line width=0.3pt] (2,3) -- (3,2);
  \draw[black!50, line width=0.3pt] (2,3) -- (3,3);
  \draw[black!50, line width=0.3pt] (2,4) -- (2,5);
  \draw[black!50, line width=0.3pt] (2,4) -- (3,3);
  \draw[black!50, line width=0.3pt] (2,4) -- (3,4);
  \draw[black!50, line width=0.3pt] (2,4) -- (3,5);
  \draw[black!50, line width=0.3pt] (2,5) -- (2,6);
  \draw[black!50, line width=0.3pt] (2,5) -- (3,5);
  \draw[black!50, line width=0.3pt] (2,6) -- (3,5);
  \draw[black!50, line width=0.3pt] (3,0) -- (4,0);
  \draw[black!50, line width=0.3pt] (3,1) -- (4,1);
  \draw[black!50, line width=0.3pt] (3,1) -- (8,0);
  \draw[black!50, line width=0.3pt] (3,2) -- (3,3);
  \draw[black!50, line width=0.3pt] (3,2) -- (4,2);
  \draw[black!50, line width=0.3pt] (3,2) -- (7,1);
  \draw[black!50, line width=0.3pt] (3,3) -- (3,4);
  \draw[black!50, line width=0.3pt] (3,3) -- (4,2);
  \draw[black!50, line width=0.3pt] (3,3) -- (4,3);
  \draw[black!50, line width=0.3pt] (3,4) -- (3,5);
  \draw[black!50, line width=0.3pt] (3,4) -- (4,3);
  \draw[black!50, line width=0.3pt] (3,4) -- (4,4);
  \draw[black!50, line width=0.3pt] (3,5) -- (4,4);
  \draw[black!50, line width=0.3pt] (4,0) -- (5,0);
  \draw[black!50, line width=0.3pt] (4,1) -- (5,1);
  \draw[black!50, line width=0.3pt] (4,1) -- (8,0);
  \draw[black!50, line width=0.3pt] (4,2) -- (4,3);
  \draw[black!50, line width=0.3pt] (4,2) -- (5,2);
  \draw[black!50, line width=0.3pt] (4,2) -- (5,3);
  \draw[black!50, line width=0.3pt] (4,2) -- (7,1);
  \draw[black!50, line width=0.3pt] (4,3) -- (4,4);
  \draw[black!50, line width=0.3pt] (4,3) -- (5,3);
  \draw[black!50, line width=0.3pt] (4,4) -- (5,3);
  \draw[black!50, line width=0.3pt] (5,0) -- (6,0);
  \draw[black!50, line width=0.3pt] (5,1) -- (6,1);
  \draw[black!50, line width=0.3pt] (5,1) -- (8,0);
  \draw[black!50, line width=0.3pt] (5,2) -- (5,3);
  \draw[black!50, line width=0.3pt] (5,2) -- (6,2);
  \draw[black!50, line width=0.3pt] (5,2) -- (7,1);
  \draw[black!50, line width=0.3pt] (5,3) -- (6,2);
  \draw[black!50, line width=0.3pt] (6,0) -- (7,0);
  \draw[black!50, line width=0.3pt] (6,1) -- (7,1);
  \draw[black!50, line width=0.3pt] (6,1) -- (8,0);
  \draw[black!50, line width=0.3pt] (6,2) -- (7,1);
  \draw[black!50, line width=0.3pt] (7,0) -- (8,0);
  \draw[black!50, line width=0.3pt] (7,1) -- (8,0);
  \fill[black] (0,0) circle (1.2pt);
  \fill[black] (0,1) circle (1.2pt);
  \fill[black] (0,2) circle (1.2pt);
  \fill[black] (0,3) circle (1.2pt);
  \fill[black] (0,4) circle (1.2pt);
  \fill[black] (0,5) circle (1.2pt);
  \fill[black] (0,6) circle (1.2pt);
  \fill[black] (0,7) circle (1.2pt);
  \fill[black] (0,8) circle (1.2pt);
  \fill[black] (1,0) circle (1.2pt);
  \fill[black] (1,1) circle (1.2pt);
  \fill[black] (1,2) circle (1.2pt);
  \fill[black] (1,3) circle (1.2pt);
  \fill[black] (1,4) circle (1.2pt);
  \fill[black] (1,5) circle (1.2pt);
  \fill[black] (1,6) circle (1.2pt);
  \fill[black] (1,7) circle (1.2pt);
  \fill[black] (2,0) circle (1.2pt);
  \fill[black] (2,1) circle (1.2pt);
  \fill[black] (2,2) circle (1.2pt);
  \fill[black] (2,3) circle (1.2pt);
  \fill[black] (2,4) circle (1.2pt);
  \fill[black] (2,5) circle (1.2pt);
  \fill[black] (2,6) circle (1.2pt);
  \fill[black] (3,0) circle (1.2pt);
  \fill[black] (3,1) circle (1.2pt);
  \fill[black] (3,2) circle (1.2pt);
  \fill[black] (3,3) circle (1.2pt);
  \fill[black] (3,4) circle (1.2pt);
  \fill[black] (3,5) circle (1.2pt);
  \fill[black] (4,0) circle (1.2pt);
  \fill[black] (4,1) circle (1.2pt);
  \fill[black] (4,2) circle (1.2pt);
  \fill[black] (4,3) circle (1.2pt);
  \fill[black] (4,4) circle (1.2pt);
  \fill[black] (5,0) circle (1.2pt);
  \fill[black] (5,1) circle (1.2pt);
  \fill[black] (5,2) circle (1.2pt);
  \fill[black] (5,3) circle (1.2pt);
  \fill[black] (6,0) circle (1.2pt);
  \fill[black] (6,1) circle (1.2pt);
  \fill[black] (6,2) circle (1.2pt);
  \fill[black] (7,0) circle (1.2pt);
  \fill[black] (7,1) circle (1.2pt);
  \fill[black] (8,0) circle (1.2pt);
\end{tikzpicture}

%% file: tikz/rank015_wireframe.tex
\begin{tikzpicture}[scale=0.256]
  \draw[black!50, line width=0.3pt] (0,0) -- (0,1);
  \draw[black!50, line width=0.3pt] (0,0) -- (1,0);
  \draw[black!50, line width=0.3pt] (0,1) -- (0,2);
  \draw[black!50, line width=0.3pt] (0,1) -- (1,0);
  \draw[black!50, line width=0.3pt] (0,1) -- (1,1);
  \draw[black!50, line width=0.3pt] (0,2) -- (0,3);
  \draw[black!50, line width=0.3pt] (0,2) -- (1,1);
  \draw[black!50, line width=0.3pt] (0,2) -- (1,2);
  \draw[black!50, line width=0.3pt] (0,2) -- (2,1);
  \draw[black!50, line width=0.3pt] (0,2) -- (3,1);
  \draw[black!50, line width=0.3pt] (0,3) -- (0,4);
  \draw[black!50, line width=0.3pt] (0,3) -- (1,2);
  \draw[black!50, line width=0.3pt] (0,3) -- (1,3);
  \draw[black!50, line width=0.3pt] (0,3) -- (1,4);
  \draw[black!50, line width=0.3pt] (0,3) -- (2,2);
  \draw[black!50, line width=0.3pt] (0,3) -- (2,4);
  \draw[black!50, line width=0.3pt] (0,3) -- (3,1);
  \draw[black!50, line width=0.3pt] (0,3) -- (3,4);
  \draw[black!50, line width=0.3pt] (0,3) -- (3,5);
  \draw[black!50, line width=0.3pt] (0,3) -- (5,0);
  \draw[black!50, line width=0.3pt] (0,4) -- (0,5);
  \draw[black!50, line width=0.3pt] (0,4) -- (1,4);
  \draw[black!50, line width=0.3pt] (0,5) -- (0,6);
  \draw[black!50, line width=0.3pt] (0,5) -- (1,4);
  \draw[black!50, line width=0.3pt] (0,5) -- (1,5);
  \draw[black!50, line width=0.3pt] (0,6) -- (0,7);
  \draw[black!50, line width=0.3pt] (0,6) -- (1,5);
  \draw[black!50, line width=0.3pt] (0,6) -- (1,6);
  \draw[black!50, line width=0.3pt] (0,7) -- (0,8);
  \draw[black!50, line width=0.3pt] (0,7) -- (1,6);
  \draw[black!50, line width=0.3pt] (0,7) -- (1,7);
  \draw[black!50, line width=0.3pt] (0,8) -- (1,7);
  \draw[black!50, line width=0.3pt] (1,0) -- (1,1);
  \draw[black!50, line width=0.3pt] (1,0) -- (2,0);
  \draw[black!50, line width=0.3pt] (1,0) -- (2,1);
  \draw[black!50, line width=0.3pt] (1,1) -- (2,1);
  \draw[black!50, line width=0.3pt] (1,2) -- (3,1);
  \draw[black!50, line width=0.3pt] (1,3) -- (2,2);
  \draw[black!50, line width=0.3pt] (1,3) -- (2,3);
  \draw[black!50, line width=0.3pt] (1,3) -- (3,2);
  \draw[black!50, line width=0.3pt] (1,3) -- (3,4);
  \draw[black!50, line width=0.3pt] (1,3) -- (4,1);
  \draw[black!50, line width=0.3pt] (1,3) -- (5,0);
  \draw[black!50, line width=0.3pt] (1,4) -- (1,5);
  \draw[black!50, line width=0.3pt] (1,4) -- (2,5);
  \draw[black!50, line width=0.3pt] (1,4) -- (3,5);
  \draw[black!50, line width=0.3pt] (1,5) -- (1,6);
  \draw[black!50, line width=0.3pt] (1,5) -- (2,5);
  \draw[black!50, line width=0.3pt] (1,6) -- (1,7);
  \draw[black!50, line width=0.3pt] (1,6) -- (2,5);
  \draw[black!50, line width=0.3pt] (1,6) -- (2,6);
  \draw[black!50, line width=0.3pt] (1,7) -- (2,6);
  \draw[black!50, line width=0.3pt] (2,0) -- (2,1);
  \draw[black!50, line width=0.3pt] (2,0) -- (3,0);
  \draw[black!50, line width=0.3pt] (2,1) -- (3,0);
  \draw[black!50, line width=0.3pt] (2,1) -- (3,1);
  \draw[black!50, line width=0.3pt] (2,2) -- (5,0);
  \draw[black!50, line width=0.3pt] (2,3) -- (3,2);
  \draw[black!50, line width=0.3pt] (2,3) -- (3,3);
  \draw[black!50, line width=0.3pt] (2,3) -- (3,4);
  \draw[black!50, line width=0.3pt] (2,4) -- (3,4);
  \draw[black!50, line width=0.3pt] (2,4) -- (3,5);
  \draw[black!50, line width=0.3pt] (2,5) -- (2,6);
  \draw[black!50, line width=0.3pt] (2,5) -- (3,5);
  \draw[black!50, line width=0.3pt] (2,6) -- (3,5);
  \draw[black!50, line width=0.3pt] (3,0) -- (3,1);
  \draw[black!50, line width=0.3pt] (3,0) -- (4,0);
  \draw[black!50, line width=0.3pt] (3,1) -- (4,0);
  \draw[black!50, line width=0.3pt] (3,1) -- (5,0);
  \draw[black!50, line width=0.3pt] (3,2) -- (3,3);
  \draw[black!50, line width=0.3pt] (3,2) -- (4,1);
  \draw[black!50, line width=0.3pt] (3,3) -- (3,4);
  \draw[black!50, line width=0.3pt] (3,3) -- (4,1);
  \draw[black!50, line width=0.3pt] (3,4) -- (3,5);
  \draw[black!50, line width=0.3pt] (3,4) -- (4,1);
  \draw[black!50, line width=0.3pt] (3,5) -- (4,1);
  \draw[black!50, line width=0.3pt] (3,5) -- (4,2);
  \draw[black!50, line width=0.3pt] (3,5) -- (4,3);
  \draw[black!50, line width=0.3pt] (3,5) -- (4,4);
  \draw[black!50, line width=0.3pt] (3,5) -- (5,0);
  \draw[black!50, line width=0.3pt] (4,0) -- (5,0);
  \draw[black!50, line width=0.3pt] (4,1) -- (4,2);
  \draw[black!50, line width=0.3pt] (4,1) -- (5,0);
  \draw[black!50, line width=0.3pt] (4,2) -- (5,0);
  \draw[black!50, line width=0.3pt] (4,3) -- (4,4);
  \draw[black!50, line width=0.3pt] (4,3) -- (5,0);
  \draw[black!50, line width=0.3pt] (4,3) -- (5,1);
  \draw[black!50, line width=0.3pt] (4,3) -- (5,2);
  \draw[black!50, line width=0.3pt] (4,4) -- (5,2);
  \draw[black!50, line width=0.3pt] (4,4) -- (5,3);
  \draw[black!50, line width=0.3pt] (5,0) -- (5,1);
  \draw[black!50, line width=0.3pt] (5,0) -- (6,0);
  \draw[black!50, line width=0.3pt] (5,1) -- (5,2);
  \draw[black!50, line width=0.3pt] (5,1) -- (6,0);
  \draw[black!50, line width=0.3pt] (5,1) -- (6,1);
  \draw[black!50, line width=0.3pt] (5,2) -- (5,3);
  \draw[black!50, line width=0.3pt] (5,2) -- (6,1);
  \draw[black!50, line width=0.3pt] (5,2) -- (6,2);
  \draw[black!50, line width=0.3pt] (5,3) -- (6,2);
  \draw[black!50, line width=0.3pt] (6,0) -- (6,1);
  \draw[black!50, line width=0.3pt] (6,0) -- (7,0);
  \draw[black!50, line width=0.3pt] (6,1) -- (6,2);
  \draw[black!50, line width=0.3pt] (6,1) -- (7,0);
  \draw[black!50, line width=0.3pt] (6,1) -- (7,1);
  \draw[black!50, line width=0.3pt] (6,2) -- (7,1);
  \draw[black!50, line width=0.3pt] (7,0) -- (7,1);
  \draw[black!50, line width=0.3pt] (7,0) -- (8,0);
  \draw[black!50, line width=0.3pt] (7,1) -- (8,0);
  \fill[black] (0,0) circle (1.2pt);
  \fill[black] (0,1) circle (1.2pt);
  \fill[black] (0,2) circle (1.2pt);
  \fill[black] (0,3) circle (1.2pt);
  \fill[black] (0,4) circle (1.2pt);
  \fill[black] (0,5) circle (1.2pt);
  \fill[black] (0,6) circle (1.2pt);
  \fill[black] (0,7) circle (1.2pt);
  \fill[black] (0,8) circle (1.2pt);
  \fill[black] (1,0) circle (1.2pt);
  \fill[black] (1,1) circle (1.2pt);
  \fill[black] (1,2) circle (1.2pt);
  \fill[black] (1,3) circle (1.2pt);
  \fill[black] (1,4) circle (1.2pt);
  \fill[black] (1,5) circle (1.2pt);
  \fill[black] (1,6) circle (1.2pt);
  \fill[black] (1,7) circle (1.2pt);
  \fill[black] (2,0) circle (1.2pt);
  \fill[black] (2,1) circle (1.2pt);
  \fill[black] (2,2) circle (1.2pt);
  \fill[black] (2,3) circle (1.2pt);
  \fill[black] (2,4) circle (1.2pt);
  \fill[black] (2,5) circle (1.2pt);
  \fill[black] (2,6) circle (1.2pt);
  \fill[black] (3,0) circle (1.2pt);
  \fill[black] (3,1) circle (1.2pt);
  \fill[black] (3,2) circle (1.2pt);
  \fill[black] (3,3) circle (1.2pt);
  \fill[black] (3,4) circle (1.2pt);
  \fill[black] (3,5) circle (1.2pt);
  \fill[black] (4,0) circle (1.2pt);
  \fill[black] (4,1) circle (1.2pt);
  \fill[black] (4,2) circle (1.2pt);
  \fill[black] (4,3) circle (1.2pt);
  \fill[black] (4,4) circle (1.2pt);
  \fill[black] (5,0) circle (1.2pt);
  \fill[black] (5,1) circle (1.2pt);
  \fill[black] (5,2) circle (1.2pt);
  \fill[black] (5,3) circle (1.2pt);
  \fill[black] (6,0) circle (1.2pt);
  \fill[black] (6,1) circle (1.2pt);
  \fill[black] (6,2) circle (1.2pt);
  \fill[black] (7,0) circle (1.2pt);
  \fill[black] (7,1) circle (1.2pt);
  \fill[black] (8,0) circle (1.2pt);
\end{tikzpicture}

%% file: tikz/bow_tie_histogram.tex
\begin{tikzpicture}
\begin{axis}[
    xtick={0,4,8,12,16,20},
    tick label style={font=\scriptsize},
    ymajorgrids=true,
    grid style={dashed,gray!30},
    width=\linewidth,
    height=3.8cm,
    ymin=0,
    yticklabel={\pgfmathprintnumber{\tick}\%},
    yticklabel style={font=\scriptsize, /pgf/number format/.cd, fixed, fixed zerofill, precision=0},
]
\addplot[const plot, draw=black, thick, no markers] coordinates {
    (-0.5,0.0000)
    (0.5,0.0000)
    (1.5,0.0000)
    (2.5,0.0000)
    (3.5,4.6875)
    (4.5,0.0000)
    (5.5,11.3281)
    (6.5,0.0000)
    (7.5,14.6484)
    (8.5,0.0000)
    (9.5,25.5371)
    (10.5,0.0000)
    (11.5,17.1997)
    (12.5,0.0000)
    (13.5,13.9526)
    (14.5,0.0000)
    (15.5,10.9695)
    (16.5,0.0000)
    (17.5,1.6113)
    (18.5,0.0000)
    (19.5,0.0652)
    (20.5,0.0000)
    (21.5,0.0004)
    (22.5,0)
};
\end{axis}
\end{tikzpicture}

%% file: tikz/rank016_histogram.tex
\begin{tikzpicture}
\begin{axis}[
    xtick={0,4,8,12,16,20},
    tick label style={font=\scriptsize},
    ymajorgrids=true,
    grid style={dashed,gray!30},
    width=\linewidth,
    height=3.8cm,
    ymin=0,
    yticklabel={\pgfmathprintnumber{\tick}\%},
    yticklabel style={font=\scriptsize, /pgf/number format/.cd, fixed, fixed zerofill, precision=0},
]
\addplot[const plot, draw=black, thick, no markers] coordinates {
    (-0.5,0.0000)
    (0.5,0.0000)
    (1.5,0.2930)
    (2.5,1.7578)
    (3.5,4.1504)
    (4.5,6.4941)
    (5.5,10.0647)
    (6.5,14.0991)
    (7.5,15.6433)
    (8.5,12.5443)
    (9.5,6.7974)
    (10.5,3.7655)
    (11.5,4.5805)
    (12.5,6.1824)
    (13.5,6.3260)
    (14.5,4.3626)
    (15.5,2.0241)
    (16.5,0.6985)
    (17.5,0.1847)
    (18.5,0.0253)
    (19.5,0.0064)
    (20.5,0.0000)
    (21.5,0.0000)
    (22.5,0)
};
\end{axis}
\end{tikzpicture}

%% file: tikz/rank015_histogram.tex
\begin{tikzpicture}
\begin{axis}[
    xtick={0,4,8,12,16,20},
    tick label style={font=\scriptsize},
    ymajorgrids=true,
    grid style={dashed,gray!30},
    width=\linewidth,
    height=3.8cm,
    ymin=0,
    yticklabel={\pgfmathprintnumber{\tick}\%},
    yticklabel style={font=\scriptsize, /pgf/number format/.cd, fixed, fixed zerofill, precision=0},
]
\addplot[const plot, draw=black, thick, no markers] coordinates {
    (-0.5,0.0000)
    (0.5,0.0000)
    (1.5,0.0000)
    (2.5,0.0572)
    (3.5,0.5581)
    (4.5,2.5065)
    (5.5,6.8969)
    (6.5,13.0667)
    (7.5,18.1908)
    (8.5,19.4337)
    (9.5,16.4603)
    (10.5,11.3434)
    (11.5,6.4880)
    (12.5,3.1214)
    (13.5,1.2716)
    (14.5,0.4390)
    (15.5,0.1279)
    (16.5,0.0311)
    (17.5,0.0062)
    (18.5,0.0010)
    (19.5,0.0001)
    (20.5,0.0000)
    (21.5,0.0000)
    (22.5,0)
};
\end{axis}
\end{tikzpicture}